\documentclass[a4paper,11pt]{article}
\usepackage[utf8]{inputenc}
\usepackage{amsmath}
\usepackage{amsthm}
\usepackage{amssymb}
\usepackage{hyperref}
\usepackage{xcolor}

\setlength{\parindent}{0pt}
\setlength{\parskip}{1.5ex}

\hypersetup{
    colorlinks,
    linkcolor={blue!80!black},
    citecolor={blue!80!black},
    urlcolor={blue!80!black}
}

\newtheorem{definition}{Definition}
\newtheorem{proposition}{Proposition}
\newtheorem{lemma}{Lemma}
\newtheorem{corollary}{Corollary}
\usepackage[T1]{fontenc}

\newcommand{\Liep}{\Lie'}
\newcommand{\Cinf}{C^\infty}
\newcommand{\ud}{\mathrm{d}}
\newcommand{\pd}{\partial}
\newcommand{\e}{\varepsilon}
\newcommand{\bN}{\mathbb{N}}
\newcommand{\bR}{\mathbb{R}}
\newcommand{\cA}{\mathcal{A}}
\newcommand{\ccD}{\mathcal{D}}

\newcommand{\cG}{\mathcal{G}}

\newcommand{\csub}{\subset \subset}
\newcommand{\coleq}{\mathrel{\mathop:}=}
\newcommand{\fX}{\mathfrak{X}}
\providecommand{\norm}[1]{\left\lVert#1\right\rVert}
\DeclareMathOperator{\Fl}{Fl}
\DeclareMathOperator{\Div}{div}
\DeclareMathOperator{\dist}{dist}

\DeclareMathOperator{\supp}{supp}

\newcommand{\LX}{\Lie_X}
\newcommand{\Lie}{\mathrm{L}}

\newcommand{\sk}[2]{\widetilde\cA_{#1}(#2)}
\newcommand{\lsk}[2]{\widetilde\cA_{#1}(#2)}
\newcommand{\ub}[2]{\hat\cA_{#1}(#2)}
\newcommand{\lub}[2]{\cA_{#1}(#2)}
\providecommand{\abso}[1]{\left\lvert#1\right\rvert}

\begin{document}

\title{Approximation properties of\\local smoothing kernels}
\date{}
\author{Eduard A.~Nigsch\thanks{Wolfgang-Pauli-Institut, Oskar-Morgenstern-Platz 1, 1090 Vienna, Austria. email: eduard.nigsch@univie.ac.at}}
\maketitle

\begin{abstract}
We study some properties of smoothing kernels and their local expression as they appear in the construction of Colombeau-type generalized function algebras which are diffeomorphism invariant.
\end{abstract}

\section{Introduction}

Smoothing kernels as introduced in \cite[Section 3]{global} are key elements in the construction of diffeomorphism invariant Colombeau algebras, which are generalized function algebras featuring an embedding of smooth functions and distributions while retaining as much of the classical properties and operations as possible under the constraint of the Schwartz impossibility result \cite{Schwartz}.

At the heart of such a construction (illustrated in the scalar case) lies the regularization of a distribution by representing it as a family of smooth functions, the classical example being convolution $u \star \psi$ of a distribution  $u \in \ccD'(\bR^n)$ with a test function $\psi \in \ccD(\bR^n)$. Varying $\psi$ sufficiently one captures all information about $u$: given $\psi$ with integral 1 and setting $\psi_\e(x) = \e^{-n} \psi(x/\e)$ for $\e \in I \coleq (0,1]$, $u \star \psi_\e$ converges to $u$ distributionally for $\e \to 0$. Consequently, distributions are embedded into generalized function algebras by convoluting them with suitable mollifiers.

In the setting of diffeomorphism invariant Colombeau algebras this convolution has to be done in a more general way: generalized functions there are mappings $R(\varphi, x)$ depending on mollifiers $\varphi$ and points $x$, the embedded image of a distribution $t$ is simply given by $R(\varphi,x) \coleq \langle t, \varphi(.-x) \rangle$ \cite[Definition 5.5]{found}. In order to obtain an algebra one then has to perform a quotient construction (which Colombeau algebras are distinctive for) by considering the asymptotics of $R(S_\e\phi(\e,x), x)$ \cite[Chapter 7.1]{found}, where $\phi$ is a mollifier smoothly indexed by $(\e,x) \in I \times \bR^n$ satisfying certain properties.
For Colombeau algebras on manifolds one furthermore has to change to a coordinate-free but equivalent formalism that does not rely on the linear structure of $\bR^n$, which directly results in the definition of smoothing kernels (cf. \cite[Section 3]{global} or Definition \ref{smoothkern} below) and the global algebra $\hat\cG(M)$ of \cite{global}. Technically, this leads to expressions of the form $\int f(y) \tilde \phi(\e,x)(y)\, \ud y$ (where we set $\tilde \phi(\e,x) \coleq \e^{-n}\phi(\e,x)((y-x)/\e)$) which shall be the main focus of this article.

Note that the seemingly harmless relation between $\phi$ and $\tilde \phi$ gives rise to two different (but equivalent) formulations of the local theory of $\cG^d(\Omega)$, the diffeomorphism invariant Colombeau algebra on an open subset $\Omega \subseteq \bR^n$ \cite[Section 5]{found}. Due to the fact that the local theory is built on $\phi$-like test objects, the global formulation of $\hat\cG(M)$ on a manifold \cite{global} invariably must involve a change of formalism whenever local calculations are performed, which is an unnecessary technical complication.

We will therefore introduce local smoothing kernels as immediate equivalents of their global version, which serves to decouple the global theory of \cite{global} from the local diffeomorphism invariant theory of $\cG^d(\Omega)$ and thus makes an independent, clear-cut formulation possible. Furthermore the theory gets clearer and simpler, e.g., the proof of diffeomorphism invariance and results related to association (as in \cite[Section 9]{global2}) come more easily with the proper formalism. For this purpose we will study approximation properties of smoothing kernels, which is their most needed feature.

\section{Preliminaries}

We will use the following notions in this article: $M,N$ are orientable paracompact smooth Hausdorff manifolds of finite dimension $n$, diffeomorphisms will be assumed to be orientation preserving, accordingly.
$\fX(M)$ is the space of smooth vector fields on $M$, $\LX$ is the usual Lie derivative. 
$\Omega$ is a subset of $\bR^n$,
$U \csub V$ means that $U$ is a compact subset of the interior of $V$. $C^\infty$ denotes spaces of smooth mappings, we set $\hat\cA_0(M) \coleq \{ \omega \in \Omega^n_c(M)\ |\ \int\omega = 1 \}$ and $\lub0\Omega \coleq \{\varphi \in \ccD(\Omega)\ |\ \int \varphi = 1 \}$, where $\ccD(\Omega)$ is the space of test functions on $\Omega$. $\Omega^n_c(M)$ resp.\ $\Omega^n_c(\Omega)$ are $n$-forms on $M$ resp.\ $\Omega$.
$O(\e^m)$ means the usual Landau symbol, always for $\e \rightarrow 0$.
As to the notion of smoothness employed here, we refer the reader to \cite{KM, Froe} where calculus on convenient vector spaces is presented.
For a deeper understanding and some background material we point to the related variants of Colombeau algebras which exist in the literature \cite{GKOS, found, global, global2}.

\section{Smoothing Kernels}

Smoothing kernels basically are $n$-forms depending on $\e \in I$ and an additional space variable satisfying certain properties needed for the construction of Colombeau algebras. As a preliminary we will define two-point $n$-forms on a manifold, their Lie derivative in both variables, and pullback. We will only be concerned with compactly supported $n$-forms throughout. Consider functions $\Phi \in \Cinf(M, \Omega^n_c(M))$, which shall be called \emph{two-point $n$-forms}. All subsequent results remain valid if $\Phi$ additionally depends on $\e \in I$, as it will for smoothing kernels.

\begin{definition}\label{tp_lie}On $\Cinf(M, \Omega^n_c(M))$ we define two Lie derivatives, $\LX\Phi \coleq \LX \circ \Phi$ and $(\Liep_X\Phi)(p) \coleq \left.\frac{\ud}{\ud t}\right|_{t=0} \Phi(\Fl^X_t p)$ for $p \in M$.
\end{definition}

\begin{proposition}$\LX$ and $\Liep_X$ are smooth mappings into $\Cinf(M, \Omega^n_c(M))$.
\end{proposition}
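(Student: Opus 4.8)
The plan is to reduce both assertions to standard facts of convenient calculus \cite{KM}, chiefly cartesian closedness and the smoothness of the evaluation and composition operations, so that no pointwise reasoning on the non-metrizable space $\Omega^n_c(M)$ is required. First I would collect the ingredients: (i) $\Omega^n_c(M)$ is a complete strict (LF)-space, hence a convenient vector space; (ii) by the exponential law a map $g\colon Z\to\Cinf(Y,F)$ is smooth iff its associate $g^\vee\colon Z\times Y\to F$ is smooth, and in particular the evaluation $\ev\colon\Cinf(Y,F)\times Y\to F$ is smooth; consequently post-composition $f_*\colon\Cinf(Y,E)\to\Cinf(Y,F)$ with a smooth $f$ and pre-composition $h^*\colon\Cinf(Y,F)\to\Cinf(X,F)$ with a smooth $h\colon X\to Y$ are smooth; (iii) the tangent functor induces a smooth map $T\colon\Cinf(M,\Omega^n_c(M))\to\Cinf(TM,\Omega^n_c(M)\times\Omega^n_c(M))$, so that $\Phi\mapsto\overline{d\Phi}\coleq\pr_2\circ T\Phi$ is a smooth (bounded linear) map $\Cinf(M,\Omega^n_c(M))\to\Cinf(TM,\Omega^n_c(M))$; and (iv) for a fixed $X\in\fX(M)$ the Lie derivative $\LX\colon\Omega^n_c(M)\to\Omega^n_c(M)$ is a continuous linear operator — it is a first-order differential operator, and by Cartan's formula, here simply $\LX\omega=\ud\iota_X\omega$ since $\omega$ has top degree, it does not enlarge supports, so it is continuous on each step $\Omega^n_K(M)$ and hence on the inductive limit — and therefore smooth.

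For $\LX$ the statement is then immediate: $(\LX\Phi)(p)=\LX(\Phi(p))$ says that $\LX=(\LX)_*$ is post-composition with the smooth linear map of (iv), so it is smooth by (ii); equivalently, $(\LX\Phi)(p)=\LX(\ev(\Phi,p))$ is smooth in $(\Phi,p)$, and one concludes by the exponential law.

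For $\Liep_X$ I would first check well-definedness: fixing $\Phi$ and $p$, the curve $t\mapsto\Fl^X_tp$ is the integral curve of $X$ through $p$, smooth into $M$ on some open interval about $0$, so $t\mapsto\Phi(\Fl^X_tp)$ is a smooth curve into the convenient vector space $\Omega^n_c(M)$ and hence differentiable there; thus $(\Liep_X\Phi)(p)$ is a well-defined compactly supported $n$-form (only the germ of the flow at $t=0$ is used, so possible incompleteness of $X$ plays no role). Since $X_p$ is represented as a kinematic tangent vector by exactly this curve, that derivative equals $\overline{d\Phi}(X_p)$, the principal part of $(T\Phi)(X_p)$; thus $\Liep_X\Phi=\overline{d\Phi}\circ X$. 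Hence $\Liep_X=X^*\circ(\Phi\mapsto\overline{d\Phi})$ is the composite of the smooth map of (iii) with pre-composition along the smooth section $X\colon M\to TM$, and is smooth by (ii) and (iii). (Alternatively, one may keep the flow: on the open set where it is defined, $(\Phi,p,t)\mapsto\ev(\Phi,\Fl^X(t,p))$ is a composite of smooth maps, hence smooth, and then $(\Phi,p)\mapsto\pd_t|_{t=0}\ev(\Phi,\Fl^X(t,p))$ is smooth because partial differentiation carries smooth maps to smooth maps; the exponential law again finishes the argument.)

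The individual steps are routine once this dictionary is assembled; the only genuine obstacle is conceptual, namely that smoothness here is the convenient-calculus notion with values in the non-metrizable $\Omega^n_c(M)$, which forces the whole argument to be organised through the exponential law and the known smoothness of $\ev$, $T$, composition and the operator $\LX$ rather than by any naive pointwise estimate; along the way one must verify the easy but indispensable well-definedness claims — that the curves in question really take values in and are differentiable in $\Omega^n_c(M)$, so that compact support is preserved, and that only the germ of $\Fl^X$ at $t=0$ enters.
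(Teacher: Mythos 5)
Your proposal is correct and follows essentially the same route as the paper: $\LX$ is handled as post-composition with a bounded, hence smooth, linear operator on $\Omega^n_c(M)$, and $\Liep_X\Phi$ is identified with the differential of $\Phi$ contracted with $X$ (the paper carries this out in a chart, whereas you phrase it invariantly via the tangent functor). Both reductions rest on the same convenient-calculus facts --- the exponential law and the equivalence of boundedness and smoothness for linear maps on the steps of $\Omega^n_c(M)$ --- so the difference is one of packaging, not of substance.
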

\begin{proof}
By \cite[Theorem 4.1]{found} for smoothness of $\LX$ on $\Omega^n_c(M)$ it suffices to check boundedness of $\LX\colon \Omega^n_{c,K}(M) \to \Omega^n_c(M)$ for compact sets $K \csub M$, which follows directly from \cite[Proposition A.2 (2) (i)]{global2}. For $\Liep_X$, $\Phi \in C^\infty(M, \Omega^n_c(M))$ if and only if for each chart $(U, \varphi)$ of an atlas $\Phi \circ \varphi^{-1}$ is in $C^\infty(\varphi(U), \Omega^n_c(M))$. Denote by $\alpha(t,x)$ the local flow of $X$ in the chart. Then for fixed $p$, $\alpha(t,\varphi(p))$ exists for $t$ in a neighborhood of $0$ and
$\left.\frac{\ud}{\ud t}\right|_{t=0} \Phi(\Fl^X_t p) = \left.\frac{\ud}{\ud t}\right|_{t=0} (\Phi \circ \varphi^{-1})(\varphi \circ \Fl^X_t p)
= \left.\frac{\ud}{\ud t}\right|_{t=0} (\Phi \circ \varphi^{-1})(\alpha(t,\varphi(p)))
= \ud(\Phi \circ \varphi^{-1})(\varphi(p)) \cdot X(\varphi(p))$
(where the local expression of $X$ is denoted by the same letter). From this we see that the limit exists and is smooth.
\end{proof}

\begin{definition}\label{tp_pullback}Given a diffeomorphism $\mu\colon M \to N$ and a two-point $n$-form $\Phi \in \Cinf(N, \Omega^n_c(N))$, the \emph{pullback} $\mu^* \Phi \in C^\infty(M, \Omega^n_c(M))$ of $\Phi$  is defined as $\mu^* \Phi \coleq \mu^* \circ \Phi \circ \mu$. % TODO: C^\infty-map would suffice
\end{definition}

Now we will examine how these Lie derivatives translate under pullbacks.

\begin{lemma}\label{tpliediffeo}Let $\Phi \in C^\infty(N, \Omega^n_c(N))$ be a two-point $n$-form and $\mu\colon M \to N$ a diffeomorphism. Then $\LX (\mu^* \Phi) = \mu^* ( \Lie_{\mu_*X}\Phi)$ and $\Liep_X (\mu^* \Phi) = \mu^* ( \Lie_{\mu_*X} \Phi)$.
\end{lemma}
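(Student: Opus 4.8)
The plan is to verify each of the two asserted identities pointwise on $M$, reducing everything to the classical naturality of the ordinary Lie derivative of $n$-forms under diffeomorphisms, namely $\mu^*(\Lie_Y \omega) = \Lie_{\mu^*Y}(\mu^*\omega)$ for $Y \in \fX(N)$ and $\omega \in \Omega^n_c(N)$, together with the chain rule for flows, $\mu \circ \Fl^{\mu^*X}_t = \Fl^X_t \circ \mu$ (equivalently $\Fl^{\mu_*X}_t = \mu \circ \Fl^X_t \circ \mu^{-1}$), both of which are standard. Note $\mu^*X = (\mu^{-1})_* X$, so $\mu_*(\mu^*X) = X$; I will keep the bookkeeping straight by writing everything in terms of $\mu_*X$ as in the statement.

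First I would handle $\LX$. By Definition \ref{tp_pullback}, $(\mu^*\Phi)(p) = \mu^*(\Phi(\mu(p)))$, so $\LX(\mu^*\Phi)(p) = \LX\bigl(\mu^*(\Phi(\mu(p)))\bigr)$ by Definition \ref{tp_lie}. Now apply the naturality identity for the ordinary Lie derivative with $Y = \mu_*X$ and $\omega = \Phi(\mu(p))$: since $\mu^*(\mu_*X) = X$, we get $\LX(\mu^*\omega) = \mu^*(\Lie_{\mu_*X}\omega)$, hence $\LX(\mu^*\Phi)(p) = \mu^*\bigl((\Lie_{\mu_*X}\Phi)(\mu(p))\bigr) = \bigl(\mu^*(\Lie_{\mu_*X}\Phi)\bigr)(p)$, the last equality again by Definition \ref{tp_pullback}. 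This gives the first identity.

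Next I would treat $\Liep_X$. By Definition \ref{tp_lie}, $(\Liep_X(\mu^*\Phi))(p) = \frac{\ud}{\ud t}\big|_{t=0}(\mu^*\Phi)(\Fl^X_t p) = \frac{\ud}{\ud t}\big|_{t=0}\mu^*\bigl(\Phi(\mu(\Fl^X_t p))\bigr)$. Since $\mu^*\colon \Omega^n_c(N) \to \Omega^n_c(M)$ is a bounded linear map, it commutes with the $t$-derivative, so this equals $\mu^*\bigl(\frac{\ud}{\ud t}\big|_{t=0}\Phi(\mu(\Fl^X_t p))\bigr)$. Using $\mu \circ \Fl^X_t = \Fl^{\mu_*X}_t \circ \mu$ I rewrite $\mu(\Fl^X_t p) = \Fl^{\mu_*X}_t(\mu(p))$, whence $\frac{\ud}{\ud t}\big|_{t=0}\Phi(\Fl^{\mu_*X}_t(\mu(p))) = (\Liep_{\mu_*X}\Phi)(\mu(p))$ by definition. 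So $(\Liep_X(\mu^*\Phi))(p) = \mu^*\bigl((\Liep_{\mu_*X}\Phi)(\mu(p))\bigr)$. At this point the claim would follow if $\Liep_{\mu_*X}\Phi$ coincides with $\Lie_{\mu_*X}\Phi$ — so the remaining point is to identify the two Lie derivatives of Definition \ref{tp_lie}; from the local computation in the proof of the Proposition, $(\Liep_Y\Phi)(p) = \ud(\Phi\circ\varphi^{-1})(\varphi(p))\cdot Y(\varphi(p))$ is the directional derivative of the chart representative of $\Phi$ along $Y$, whereas $(\Lie_Y\Phi)(p) = \Lie_Y(\Phi(p))$ acts on the form value; these are genuinely different operators on two-point forms, so I would instead apply the already-proven first identity $\LX(\mu^*\Phi) = \mu^*(\Lie_{\mu_*X}\Phi)$ only to get the right-hand side, and prove separately that $\Liep_X(\mu^*\Phi) = \mu^*(\Lie_{\mu_*X}\Phi)$ via the flow computation just sketched, being careful that the right-hand side uses $\Lie$, not $\Liep$.

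The main obstacle is precisely this last bookkeeping subtlety: the lemma asserts that \emph{both} $\LX(\mu^*\Phi)$ and $\Liep_X(\mu^*\Phi)$ equal the \emph{same} object $\mu^*(\Lie_{\mu_*X}\Phi)$ (with $\Lie$ on the right in both cases), even though $\LX$ and $\Liep_X$ are different operators. So after the flow computation yields $\Liep_X(\mu^*\Phi)(p) = \mu^*\bigl((\Liep_{\mu_*X}\Phi)(\mu(p))\bigr)$, I must still convert $\Liep_{\mu_*X}\Phi$ into $\Lie_{\mu_*X}\Phi$ \emph{under the pullback and the base-point substitution}. Concretely, I expect this to come out by differentiating the relation $\mu^*(\Phi(\mu(\Fl^X_t p)))$ directly and recognizing, via $\mu^* \circ \Phi \circ \mu = \mu^*\Phi$ and $\mu\circ\Fl^X_t = \Fl^{\mu_*X}_t\circ\mu$, that the $t$-derivative of $\mu^*\Phi$ along $\Fl^X_t$ at $p$ records exactly the same data as the $t$-derivative of $\Phi$ along $\Fl^{\mu_*X}_t$ at $\mu(p)$ pulled back — and then invoking the chart formula for $\Liep$ from the Proposition's proof to see this agrees with $\mu^*(\Lie_{\mu_*X}\Phi)$. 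Since everything is a finite composition of smooth maps and the pullback is continuous linear, no analytic subtleties arise; the work is entirely in keeping the two Lie derivatives and the pushforward/pullback of $X$ straight, and I would present it with a couple of explicit chart-level lines rather than abstractly.
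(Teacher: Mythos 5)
Your two computations are correct and are, step for step, the paper's own proof: naturality of the classical Lie derivative of $n$-forms gives $\LX(\mu^*\Phi) = \mu^*(\Lie_{\mu_*X}\Phi)$, and the flow computation (using $\mu\circ\Fl^X_t = \Fl^{\mu_*X}_t\circ\mu$ together with linearity and continuity of $\mu^*$ to pull the $t$-derivative inside) gives $(\Liep_X(\mu^*\Phi))(p) = \mu^*\bigl((\Liep_{\mu_*X}\Phi)(\mu(p))\bigr)$. The one step you leave open --- the promised conversion of $\Liep_{\mu_*X}\Phi$ into $\Lie_{\mu_*X}\Phi$ ``under the pullback and the base-point substitution'' --- is a step that would fail: no chart-level computation will produce it. If the second identity really held with the unprimed $\Lie$ on the right, then taking $\mu = \id_M$ would force $\Liep_X\Phi = \Lie_X\Phi$ for every two-point $n$-form, which is false (for a constant map $\Phi \equiv \omega$ one has $\Liep_X\Phi = 0$ but $\Lie_X\Phi = \Lie_X\omega \neq 0$ in general), as you yourself observe when you note the two operators are ``genuinely different.''

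The resolution is that the statement contains a typo: the second identity is meant to read $\Liep_X(\mu^*\Phi) = \mu^*(\Liep_{\mu_*X}\Phi)$, with the prime carried over to the right-hand side. This is exactly what the paper's displayed computation establishes --- its penultimate expression $\left.\frac{\ud}{\ud t}\right|_{t=0}\Phi\bigl((\Fl^{\mu_*X}_t\circ\mu)(p)\bigr)$ is, by Definition \ref{tp_lie}, precisely $(\Liep_{\mu_*X}\Phi)(\mu(p))$, and the prime is simply dropped in the last equality --- and it is the version actually used in Proposition \ref{lahmah}, where each $\Liep_{\zeta_i}$ on $M$ is matched with $\Liep_{\mu_*\zeta_i}$ on $N$. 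So your flow computation already finishes the proof: stop at $\mu^*\bigl((\Liep_{\mu_*X}\Phi)(\mu(p))\bigr) = \bigl(\mu^*(\Liep_{\mu_*X}\Phi)\bigr)(p)$ and correct the statement, rather than chase an identity that cannot hold. You deserve credit for flagging the inconsistency; the only flaw is that you hedge with ``I expect this to come out'' instead of concluding that it cannot.
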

\begin{proof}
First, we have
$ \LX(\mu^* \Phi) = \LX \circ \mu^* \circ \Phi \circ \mu = \mu^* \circ \Lie_{\mu_*X} \circ \Phi \circ \mu = \mu^* ( \Lie_{\mu_*X} \Phi)$. Second, because $\mu^*\colon \Omega^n_c(M) \to \Omega^n_c(M)$ is linear and continuous we have
\begin{align*}
\Liep_X (\mu^*\Phi)(p)  & = \left.\frac{\ud}{\ud t}\right|_{t=0} (\mu^*\Phi)(\Fl^X_t p) = \left.\frac{\ud}{\ud t}\right|_{t=0} (\mu^* \circ \Phi)((\mu \circ \Fl^X_t) (p) ) \\
&= \left.\frac{\ud}{\ud t}\right|_{t=0} (\mu^* \circ \Phi)((\Fl^{\mu_* X}_t \circ \mu)(p)) = \mu^* \left( \left.\frac{\ud}{\ud t}\right|_{t=0} \Phi((\Fl^{\mu_* X}_t \circ \mu)(p)) \right) \\
&= \mu^*((\Lie_{\mu_*X}\Phi)(\mu(p))) =\mu^* ( \Lie_{\mu_*X} \Phi).\qedhere
\end{align*}
\end{proof}

With $\e$-dependence added we can give the definition of (global) smoothing kernels (\cite[Definition 3.3]{global}).

\begin{definition}\label{smoothkern}
\begin{enumerate}
\item A mapping $\Phi \in C^\infty(I \times M, \ub0M)$ is called a \emph{smoothing kernel} if it satisfies the following conditions w.r.t.\ any Riemannian metric $g$ on $M$:
\begin{enumerate}
\item[(i)] $\forall K \csub M$ $\exists \e_0>0, C>0$ $\forall p \in K$ $\forall \e\le\e_0$: $\supp \Phi(\e,p) \subseteq B^g_{\e C}(p)$ (the open metric ball of radius $\e C$ at $p$).
\item[(ii)] $\forall K \csub M$ $\forall l,m \in \bN_0$ $\forall \zeta_1,\dotsc,\zeta_l,\theta_1,\dotsc,\theta_m \in \fX(M)$ the norm of $(\Lie_{\theta_1}\dotsc \Lie_{\theta_m} (\Liep_{\zeta_1} + \Lie_{\zeta_1}) \dotsc (\Liep_{\zeta_l} + \Lie_{\zeta_l})\Phi)(\e,p)(q)$ is $O(\e^{-n-m})$ uniformly for $p \in K$ and $q \in M$.
\end{enumerate}
The space of all smoothing kernels is denoted by $\sk 0M$.
\item For each $k \in \bN$ denote by $\sk kM$ the set of all $\Phi \in \sk 0M$ such that $\forall f \in C^\infty(M)$ and $K \csub M$, $\abso{f(p) - \int_M f \cdot \Phi(\e,p)} = O(\e^{k+1})$ uniformly for $p \in K$.
\end{enumerate}
\end{definition}

Note that elements of $\sk 0M$ satisfy the asymptotics of (2) for $k=0$. One even has $\sup_{x \in K} \abso{f(p,p) - \int_M f(p,.)\cdot \tilde\phi(\e,p) } = O(\e^{k+1})$ \cite[Lemma 3.6]{global2}. That this definition indeed is independent of the metric used follows from the next lemma, which is obtained similarly to \cite[Lemma 3.4]{global}.

\begin{lemma}\label{lama}Let $(M, g)$ and $(N, h)$ be Riemannian manifolds and $\mu\colon M \to N$ a diffeomorphism. Then $\forall K \csub M$ $\exists C>0$ such that
 \begin{enumerate}
  %\item[(i)] $\norm{(\mu^*t)(p)}_g \le C \norm{t(\mu(p))}_h$ $\forall t \in \cTrsN$ $\forall p \in K$.
  \item[(i)] $\norm{(\mu^*\omega)(p)}_g \le C \norm{\omega(\mu(p))}_h$ $\forall \omega \in \Omega^n_c(N)$ $\forall p \in K$.
  \item[(ii)] $B_r^g(p) \subseteq \mu^{-1}(B_{r C}^h(\mu(p))) = B_{rC}^{\mu^*h}(p)$ $\forall r>0$ $\forall p \in K$.
 \end{enumerate}
\end{lemma}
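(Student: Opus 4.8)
The plan is to prove (i) and (ii) separately, in each case stripping off the diffeomorphism $\mu$ and reducing the estimate to compactness of $K$ together with continuity of a ``distortion'' attached to $\mu$.

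For (i) I would argue through volume densities. As $M,N$ are oriented and $\mu$ is orientation preserving, the Riemannian volume forms $\mathrm{vol}_g$ and $\mathrm{vol}_h$ are global nowhere-vanishing sections and $\mu^*\mathrm{vol}_h$ is a positive multiple of $\mathrm{vol}_g$; write $\mu^*\mathrm{vol}_h=F\cdot\mathrm{vol}_g$ with $F\in\Cinf(M)$, $F>0$. Since $\mathrm{vol}_h(q)$ resp.\ $\mathrm{vol}_g(p)$ are unit vectors of the one-dimensional inner product spaces $\Lambda^nT_q^*N$ resp.\ $\Lambda^nT_p^*M$, writing an $n$-form as a scalar function times the volume form reads off its norm: for $\omega\in\Omega^n_c(N)$, $\omega=f\cdot\mathrm{vol}_h$ with $f\in\Cinf(N)$ gives $\norm{\omega(q)}_h=\abso{f(q)}$, while $\mu^*\omega=(f\circ\mu)\cdot\mu^*\mathrm{vol}_h=\bigl((f\circ\mu)\,F\bigr)\cdot\mathrm{vol}_g$ gives $\norm{(\mu^*\omega)(p)}_g=\abso{f(\mu(p))}\,F(p)=\norm{\omega(\mu(p))}_h\cdot F(p)$. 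Hence $C_1\coleq\max_{p\in K}F(p)$, finite by compactness of $K$, works, uniformly in $\omega$.

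For (ii) I would first observe that $\mu\colon(M,\mu^*h)\to(N,h)$ is a Riemannian isometry, hence preserves lengths of curves; since $\gamma\mapsto\mu\circ\gamma$ bijects curves in $M$ from $p$ to $q$ with curves in $N$ from $\mu(p)$ to $\mu(q)$, this yields $\dist_{\mu^*h}(p,q)=\dist_h(\mu(p),\mu(q))$, which is precisely the asserted identity $\mu^{-1}(B^h_{rC}(\mu(p)))=B^{\mu^*h}_{rC}(p)$. It thus remains to compare $g$ and $\mu^*h$ on $M$ itself: I would fix a compact $K'$ with $K\subseteq\operatorname{int}K'$ (take $K'=M$ if $M$ is compact) and, using compactness of the $g$-unit sphere bundle over $K'$ and continuity of $v\mapsto(\mu^*h)(v,v)$, obtain $C_2>0$ with $(\mu^*h)_x(v,v)\le C_2^2\,g_x(v,v)$ for all $x\in K'$, $v\in T_xM$. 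For $p\in K$ and $q$ with $\dist_g(p,q)<r$, a curve $\gamma$ from $p$ to $q$ of $g$-length $<r$ that stays inside $K'$ then has $\mu^*h$-length $<C_2 r$, so $\dist_{\mu^*h}(p,q)<C_2 r$; with $C\coleq\max(C_1,C_2)$ this gives both assertions.

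The point to watch — and the real obstacle — is that the pointwise bound $\mu^*h\le C_2^2 g$ is only available on the fixed compact $K'$, whereas a $g$-short curve from $p\in K$ need not stay in $K'$ once $r$ is large. For the radii that actually occur this is harmless: with $K'$ chosen as above, $\delta\coleq\dist_g(K,M\setminus K')\in(0,\infty]$ is positive, and any curve issuing from $p\in K$ of $g$-length $<r\le\delta$ remains within $g$-distance $<\delta$ of $p$, hence inside $K'$, so the argument goes through for all $r\le\delta$ — for all $r$ when $M$ is compact, and for all sufficiently small $r$ otherwise, which is what smoothing kernels (supported in $\e$-scale balls) require. For literally every $r>0$ the inclusion is equivalent to a global one-sided Lipschitz bound $\dist_{\mu^*h}(p,\cdot)\le C\,\dist_g(p,\cdot)$ on $M$ and would require $g$ and $\mu^*h$ to be uniformly comparable globally.
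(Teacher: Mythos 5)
Your proof of (i) is correct and is the standard argument: since the fibres of $\Lambda^nT^*M$ are one-dimensional, writing $\omega=f\cdot\mathrm{vol}_h$ and $\mu^*\mathrm{vol}_h=F\cdot\mathrm{vol}_g$ (with $F>0$ by the paper's standing orientation assumptions) turns the claim into the boundedness of the smooth function $F$ on the compact set $K$, and in fact yields the sharp equality $\norm{(\mu^*\omega)(p)}_g=F(p)\,\norm{\omega(\mu(p))}_h$. The paper gives no proof of its own here — it only remarks that the lemma is ``obtained similarly to \cite[Lemma 3.4]{global}'' — and what you do is exactly that kind of pointwise fibre-metric comparison over a compact set, so there is nothing to compare against beyond noting that your route is the expected one. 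Likewise, in (ii) the identification $\mu^{-1}(B^h_{rC}(\mu(p)))=B^{\mu^*h}_{rC}(p)$ via the isometry $\mu\colon(M,\mu^*h)\to(N,h)$ and the comparison $(\mu^*h)\le C_2^2\,g$ on a compact neighbourhood $K'\supseteq K$ are both correct.

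The substantive content of your last paragraph should not be read as a gap in your proof but as a (correct) criticism of the statement: with $C$ quantified before $r$, the inclusion $B^g_r(p)\subseteq B^{\mu^*h}_{rC}(p)$ for \emph{all} $r>0$ is false in general. Already $M=N=\bR$ with the Euclidean metrics, $\mu(x)=x^3+x$ and $K=\{0\}$ gives a counterexample, since one would need $r^3+r\le Cr$ for every $r$. What your argument proves — the inclusion for all $r\le r_0$ with $r_0\coleq\dist_g(K,M\setminus K')>0$ — is the version that is actually true and is all that is ever used: in Proposition~\ref{lahmah} and Proposition~\ref{localprop} the lemma is invoked only for radii $\e C$ with $\e\le\e_0$, and the cited \cite[Lemma 3.4]{global} likewise carries such a smallness restriction. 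The one thing I would ask you to add is an explicit statement that you are proving Lemma~\ref{lama}~(ii) with ``$\forall r>0$'' weakened to ``$\exists r_0>0$ $\forall r\le r_0$'', together with your observation that the unrestricted version would amount to a global Lipschitz comparison of $\dist_g$ and $\dist_{\mu^*h}$, which does not follow from compactness of $K$.
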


Now we define a suitable action of diffeomorphisms on smoothing kernels and show that this leaves them invariant.

\begin{proposition}\label{lahmah}Given $\Phi \in \sk kN$ and a diffeomorphism $\mu\colon M \to N$ the map $\Psi\colon I \times M \to \Omega^n_c(M)$ defined by $\Psi(\e,p) \coleq \mu^*(\Phi(\e,\mu(p)))$ is in $\sk kM$.
\end{proposition}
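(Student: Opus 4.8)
The plan is to recognize $\Psi$ as the pullback of $\Phi$ in the sense of Definition~\ref{tp_pullback}, with the $\e$-dependence carried along: for each $\e$ one has $\Psi(\e,\cdot) = \mu^*(\Phi(\e,\cdot))$, so that $\Psi(\e,p) = \mu^*\bigl(\Phi(\e,\mu(p))\bigr)$, and then to transport the defining properties of $\sk kN$ across $\mu$. Smoothness of $\Psi$ as a map into $\Omega^n_c(M)$ is immediate, being a composite of $\id_I\times\mu$, of $\Phi$, and of the linear continuous (hence smooth) map $\mu^*\colon\Omega^n_c(N)\to\Omega^n_c(M)$; moreover $\int_M\Psi(\e,p) = \int_N\Phi(\e,\mu(p)) = 1$ since $\mu$ is an orientation-preserving diffeomorphism, so $\Psi\in\Cinf(I\times M,\ub0M)$. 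Because the conditions of Definition~\ref{smoothkern} do not depend on the chosen metric (Lemma~\ref{lama}), I would fix one Riemannian metric $g$ on $M$ and one, $h$, on $N$, and verify (i), (ii) and the $O(\e^{k+1})$-estimate for $g$, freely using the corresponding properties of $\Phi$ for $h$.

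Condition (i) and the approximation estimate are short. For (i), given $K\csub M$ choose $\e_0,C_1>0$ with $\supp\Phi(\e,\mu(p))\subseteq B^h_{\e C_1}(\mu(p))$ for $p\in K$, $\e\le\e_0$ (possible as $\mu(K)\csub N$); since $\supp\Psi(\e,p) = \mu^{-1}(\supp\Phi(\e,\mu(p)))$, applying Lemma~\ref{lama}(ii) to the diffeomorphism $\mu^{-1}\colon(N,h)\to(M,g)$ and the compact set $\mu(K)$ produces $C_2>0$ with $\mu^{-1}(B^h_r(\mu(p)))\subseteq B^g_{rC_2}(p)$, whence $\supp\Psi(\e,p)\subseteq B^g_{\e C_1C_2}(p)$. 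For the approximation estimate, $f\cdot\mu^*\omega = \mu^*\bigl((f\circ\mu^{-1})\cdot\omega\bigr)$ together with the change-of-variables formula gives $\int_M f\cdot\Psi(\e,p) = \int_N(f\circ\mu^{-1})\cdot\Phi(\e,\mu(p))$ for $f\in\Cinf(M)$; applying the defining estimate of $\sk kN$ to $f\circ\mu^{-1}\in\Cinf(N)$ and $\mu(K)$ and using $(f\circ\mu^{-1})(\mu(p)) = f(p)$ yields $\abso{f(p) - \int_M f\cdot\Psi(\e,p)} = O(\e^{k+1})$ uniformly for $p\in K$.

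The heart of the matter, and the step I expect to be the main obstacle, is the growth condition (ii). Fix $K\csub M$, $l,m\in\bN_0$ and $\zeta_1,\dotsc,\zeta_l,\theta_1,\dotsc,\theta_m\in\fX(M)$; abbreviate $D\coleq\Lie_{\theta_1}\dotsc\Lie_{\theta_m}(\Liep_{\zeta_1}+\Lie_{\zeta_1})\dotsc(\Liep_{\zeta_l}+\Lie_{\zeta_l})$ and let $D'$ be the same operator with every vector field $Y$ replaced by $\mu_*Y\in\fX(N)$. Iterating Lemma~\ref{tpliediffeo} — each step applied to the two-point $n$-form obtained so far, which again lies in $\Cinf(N,\Omega^n_c(N))$, and using linearity of $\mu^*$ on two-point forms to split the sums $\Liep+\Lie$ — yields $D\bigl(\mu^*(\Phi(\e,\cdot))\bigr) = \mu^*\bigl(D'(\Phi(\e,\cdot))\bigr)$, hence $(D\Psi)(\e,p)(q) = \mu^*\bigl(\omega_{\e,p}\bigr)(q)$ for all $p,q\in M$, where $\omega_{\e,p}\coleq\bigl(D'(\Phi(\e,\cdot))\bigr)(\mu(p))\in\Omega^n_c(N)$. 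Now locality of $\Lie$ and $\Liep$ together with the support condition for $\Phi$ confines $\supp\omega_{\e,p}$, for $\e\le\e_0$ and $p\in K$, to a fixed compact set $L\csub N$; setting $\tilde K\coleq\mu^{-1}(L)$, the quantity $(D\Psi)(\e,p)(q)$ vanishes for $q\notin\tilde K$, while for $q\in\tilde K$ Lemma~\ref{lama}(i), applied with $\tilde K$, bounds $\norm{(D\Psi)(\e,p)(q)}_g$ by $C_3\norm{\omega_{\e,p}(\mu(q))}_h$. Finally, condition (ii) for $\Phi\in\sk kN$, applied with the compact set $\mu(K)\csub N$ and the vector fields $\mu_*\theta_i,\mu_*\zeta_j\in\fX(N)$, makes $\norm{\omega_{\e,p}(\mu(q))}_h = O(\e^{-n-m})$ uniformly for $\mu(p)\in\mu(K)$ and $\mu(q)\in N$, i.e.\ uniformly for $p\in K$ and $q\in M$, which finishes the verification. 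The two delicate points are: transporting the mixed operator $D$ through the pullback cleanly — which is exactly what Definitions~\ref{tp_lie} and~\ref{tp_pullback} and Lemma~\ref{tpliediffeo} were set up for — and noticing that, although Lemma~\ref{lama}(i) only supplies a constant over compact sets whereas (ii) demands uniformity in $q$ over all of $M$, the shrinking supports keep everything inside one fixed compact set, restoring that uniformity.
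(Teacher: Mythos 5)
Your proposal is correct and follows essentially the same route as the paper: Lemma~\ref{lama}(ii) for the support condition, Lemma~\ref{tpliediffeo} followed by Lemma~\ref{lama}(i) for the derivative estimates, and a change of variables for the approximation property. The only difference is that you spell out the shrinking-support argument needed to reconcile the compact-set constant of Lemma~\ref{lama}(i) with the required uniformity in $q$ over all of $M$, a point the paper's proof leaves implicit.
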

\begin{proof}Except for the $\e$-variable this is the pullback of Definition \ref{tp_pullback}, so it maps into the right space. For (i) of Definition \ref{smoothkern}, let $K \csub M$ be given. Given any Riemannian metric $h$ on $N$ there are constants $\e_0>0$ and $C>0$ such that for all $p \in K$ and $\e \le \e_0$ the support of $\mu^*(\Phi(\e, \mu (p)))$ is contained in $\mu^{-1}(B^h_{\e C}(\mu (p)))$. By Lemma \ref{lama} (ii) for any Riemannian metric $g$ on $M$ there is a constant $L>0$ such that $\mu^{-1}(B^h_{\e C}(\mu (p))) \subseteq B^g_{\e LC}(p)$ for all $p \in K$ and $\e \le \e_0$.

For (ii), given any vector fields $\theta_1, \dotsc, \theta_m, \zeta_1,\dotsc,\zeta_l \in \fX(M)$ we see that
\[ \Lie_{\theta_1} \dotsc \Lie_{\theta_m} ( \Lie_{\zeta_1} + \Liep_{\zeta_1}) \dotsc (\Lie_{\zeta_l} + \Liep_{\zeta_l})\Psi \]
equals (by Lemma \ref{tpliediffeo})
\[ \mu^*(\Lie_{\mu_*\theta_1} \dotsc \Lie_{\mu_*\theta_m} (\Lie_{\mu_*\zeta_1} + \Liep_{\mu_*\zeta_1}) \dotsc (\Lie_{\mu_*\zeta_l} + \Liep_{\mu_*\zeta_l})\Phi) \]
whence by Lemma \ref{lama} (i) the assertion on the derivatives of $\Psi$ follows from the defining properties of $\Phi$. Finally, the approximation property follows directly from writing down the corresponding integral.
\end{proof}

\begin{definition}Given $\Phi \in \sk kN$ and a diffeomorphism $\mu\colon M \to N$, the map $\mu^*\Phi\colon (\e,p) \mapsto \mu^*(\Phi(\e,\mu(p))$ in $\sk kM$ is called the pullback of $\Phi$.
\end{definition}

\section{Local smoothing kernels}

In this section we will introduce local versions of the spaces of smoothing kernels $\sk kM$ and examine their properties in some detail.

Locally $n$-forms can be identified with smooth functions, which is made precise by the vector space isomorphism $\hat\lambda\colon \Omega^n_c(\Omega) \to \ccD(\Omega)$ assigning to $\omega \in \Omega^n_c(\Omega)$ the function $x \mapsto \omega(x)(e_1,\dotsc,e_n)$ in $\ccD(\Omega)$, where $e_1,\dotsc,e_n$ is the standard basis of $\bR^n$; its inverse is the mapping $f \mapsto f \, \ud x^1 \wedge \dotsc \wedge \ud x^n$ and both assignments are continuous. The local equivalent of the Lie derivative is simply the directional derivative: the definition $(\LX f)(x) \coleq (\ud f)(x) \cdot X(x) + \Div X(x) \cdot f(x)$ for $f \in \ccD(\Omega)$ and $x \in \Omega$ gives a smooth map $\LX\colon \ccD(\Omega) \to \ccD(\Omega)$. It also satisfies the relation $\LX(\mu^*f) = (\Lie_{\mu_*X}f) \circ \mu$ with $(\mu_*X)(x) \coleq (\ud \mu)(\mu^{-1}x) \cdot X(x)$. For any $n$-form $\omega \in \Omega^n_c(\Omega)$ we immediately obtain the properties $\omega = \hat\lambda(\omega) \, \ud x^1 \wedge \dotsc \wedge \ud x^n$, $\int \hat\lambda(\omega)(x)\, \ud x = \int \omega$, $\supp \hat\lambda(\omega) = \supp \omega$, and $\hat\lambda(\LX \omega) = \LX (\hat\lambda(\omega))$.

We have a bornological vector space isomorphism
\[ C^\infty(\Omega, \Omega^n_c(\Omega)) \cong \Cinf (\Omega, \ccD(\Omega)) \]
realized by the mapping $\hat\lambda_*\colon \tilde \phi \mapsto \hat\lambda \circ \tilde \phi$.

\begin{definition}On $C^\infty(\Omega, \ccD(\Omega))$ we define two Lie derivatives: $\LX \tilde \phi \coleq \LX \circ \tilde \phi$ and $(\Liep_X \tilde \phi) (x) \coleq (\ud \tilde \phi)(x) \cdot X(x)$ for $x \in \Omega$ and $X \in \Cinf(\Omega, \bR^n)$.
\end{definition}

From $\LX ( \hat\lambda_* \tilde \phi) = \LX \circ \hat\lambda \circ \tilde \phi = \hat\lambda \circ \LX \circ \tilde \phi$ and $\Liep_X ( \hat\lambda_* \tilde \phi)(x) = \ud (\hat\lambda \circ \tilde \phi)(x) \cdot X(x) = \hat\lambda((\ud \tilde \phi)(x) \cdot X(x)) = \hat\lambda ((\Liep_X \tilde \phi)(x))$ we see that $\hat\lambda_*$ commutes with both $\LX$ and $\Liep_X$.
As above we have the following.
\begin{lemma}\label{nformtestfct}
 $\hat\lambda_*\colon C^\infty(I \times \Omega, \Omega^n_c(\Omega)) \to C^\infty(I \times \Omega, \ccD(\Omega))$ is a bornological vector space isomorphism with inverse $(\hat\lambda^{-1})_*$ and commutes with $\LX$ and $\Liep_X$.
\end{lemma}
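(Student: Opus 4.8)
The plan is to obtain this as the two-variable analogue of the isomorphism $C^\infty(\Omega, \Omega^n_c(\Omega)) \cong C^\infty(\Omega, \ccD(\Omega))$ established just above, so that the only genuinely new point is the compatibility with the two Lie derivatives once the parameter $\e \in I$ is carried along.

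First I would record that $\hat\lambda\colon \Omega^n_c(\Omega) \to \ccD(\Omega)$ is a linear homeomorphism, hence a bounded linear isomorphism with bounded linear inverse $\hat\lambda^{-1}$. In the convenient calculus, post-composition with a bounded linear map $\ell\colon E \to F$ induces a bounded linear map $\ell_*\colon C^\infty(X,E) \to C^\infty(X,F)$, $g \mapsto \ell \circ g$, since it is linear and maps smooth curves to smooth curves (cf.\ \cite{KM}); equivalently one argues by cartesian closedness, a curve in $C^\infty(I\times\Omega, \Omega^n_c(\Omega))$ being smooth iff its adjoint $\bR \times I \times \Omega \to \Omega^n_c(\Omega)$ is, and composition with the continuous linear $\hat\lambda$ preserving this. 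Applying this with $X = I \times \Omega$ and $\ell \in \{\hat\lambda, \hat\lambda^{-1}\}$ yields bounded linear maps $\hat\lambda_*$ and $(\hat\lambda^{-1})_*$ between the two spaces in the statement; since $\hat\lambda^{-1}\circ\hat\lambda = \id$ and $\hat\lambda\circ\hat\lambda^{-1} = \id$ pointwise, they are mutually inverse, and a linear bijection bounded in both directions is a bornological isomorphism.

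For the compatibility with $\LX$ and $\Liep_X$ I would observe that both operators act only through the $\Omega$-slot and leave the $\e$-variable untouched: for $\tilde\phi \in C^\infty(I\times\Omega, \ccD(\Omega))$ and fixed $\e$ one has $(\LX\tilde\phi)(\e,x) = \LX(\tilde\phi(\e,\cdot))(x)$ and $(\Liep_X\tilde\phi)(\e,x) = \ud(\tilde\phi(\e,\cdot))(x)\cdot X(x)$, and likewise on $C^\infty(I\times\Omega,\Omega^n_c(\Omega))$. Thus the two identities $\LX(\hat\lambda\circ\tilde\phi(\e,\cdot)) = \hat\lambda\circ\LX(\tilde\phi(\e,\cdot))$ and $\Liep_X(\hat\lambda\circ\tilde\phi(\e,\cdot)) = \hat\lambda\circ\Liep_X(\tilde\phi(\e,\cdot))$ — the first from $\hat\lambda(\LX\omega) = \LX\hat\lambda(\omega)$, the second from linearity and continuity of $\hat\lambda$, exactly as in the $\e$-free computation above — hold for every $\e$ and give $\LX\hat\lambda_* = \hat\lambda_*\LX$ and $\Liep_X\hat\lambda_* = \hat\lambda_*\Liep_X$.

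I do not expect a real obstacle: the only thing needing care is that inserting the factor $I$ does not spoil the well-definedness and bi-smoothness of $\hat\lambda_*$, which is immediate from cartesian closedness of the convenient mapping-space functor together with post-composition by a bounded linear map being bounded linear; everything else is a transcription of the earlier pointwise arguments with $\e$ as an inert parameter.
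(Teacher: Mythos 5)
Your proof is correct and follows exactly the route the paper intends: the paper gives no separate proof of this lemma, simply saying ``As above'' in reference to the preceding $\e$-free isomorphism and the displayed computations showing $\hat\lambda_*$ commutes with $\LX$ and $\Liep_X$. Your write-up supplies the details (boundedness of post-composition with $\hat\lambda$ and $\hat\lambda^{-1}$ via convenient calculus, and the pointwise-in-$\e$ commutation identities) that the paper leaves implicit.
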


We will see in Proposition \ref{localprop} that the defining properties of a smoothing kernel $\Phi \in C^\infty(I \times U, \ub0U)$ translate verbatim to its \emph{local expression}, which is defined as $\tilde \phi \coleq \hat\lambda_* ( \varphi_* \Phi) \in C^\infty(I \times \varphi(U), \lub0{\varphi(U)})$. We thus define local smoothing kernels as follows.

\begin{definition}\label{localsmoothkern}\begin{enumerate}
\item[(1)]
A mapping $\tilde \phi \in C^\infty(I \times \Omega, \lub0\Omega)$ is called a \emph{local smoothing kernel} (on $\Omega)$ if it satisfies the following conditions:
\begin{enumerate}
\item[(i)] $\forall K \csub \Omega$ $\exists \e_0>0, C>0$ $\forall x \in K$ $\forall \e\le \e_0$: $\supp \tilde\phi(\e,x) \subseteq B_{\e C}(x) \subseteq \Omega$, where $B_{\e C}(x)$ is the Euclidean ball.
\item[(ii)] $\forall K \csub \Omega$ $\forall \alpha,\beta \in \bN_0^n$ we have the estimate $\abso{ (\pd_y^\beta \pd_{x+y}^\alpha \tilde \phi) (\e,x)(y)} = O(\e^{-n-\abso{\beta}})$ uniformly for $x \in K$ and $y \in \Omega$.
\end{enumerate}
The space of all local smoothing kernels is denoted by $\lsk 0\Omega$.
\item[(2)] For each $k \in \bN$ denote by $\lsk k\Omega$ the set of all $\tilde\phi \in \lsk 0\Omega$ such that for all $f \in C^\infty(\Omega)$ and $K \csub \Omega$, $\abso{f(x) - \int_\Omega f(y)\tilde\phi(\e,x)(y)\, \ud y} = O(\e^{k+1})$ uniformly for $x \in K$.
\end{enumerate}
\end{definition}

Again, elements of $\lsk 0\Omega$ satisfy the asymptotics of (2) for $k=0$ and by the usual methods (Taylor expansion of $f$) one even has
\[ \abso{f(x,x) - \int_\Omega f(x,y)\tilde\phi(\e,x)(y)\, \ud y } = O(\e^{k+1}) \] uniformly for $x \in K$.

\begin{proposition}\label{localprop}For a chart $(U, \varphi)$ there is a bornological vector space isomorphism $\sk kU \cong \lsk k{\varphi(U)}$ given by $\Phi \mapsto \hat\lambda_* ( \varphi_* \Phi)$.
\end{proposition}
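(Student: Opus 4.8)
The plan is to realize $\Phi \mapsto \hat\lambda_*(\varphi_*\Phi)$ as a composition of two bornological isomorphisms of the ambient spaces and then to check that, under this composition, each of the three defining conditions of $\sk kU$ in Definition~\ref{smoothkern} corresponds verbatim to the analogous condition of $\lsk k{\varphi(U)}$ in Definition~\ref{localsmoothkern}; the isomorphism then restricts to the subspaces. At the ambient level, the pushforward $\varphi_* = (\varphi^{-1})^*\colon C^\infty(I\times U,\Omega^n_c(U)) \to C^\infty(I\times\varphi(U),\Omega^n_c(\varphi(U)))$ of Definition~\ref{tp_pullback} (with $\mu = \varphi^{-1}$) is a bornological isomorphism with inverse $\varphi^*$ --- boundedness in both directions follows from cartesian closedness of the convenient calculus \cite{KM} together with continuity of the pullback of $n$-forms along a fixed diffeomorphism --- and, since an orientation-preserving diffeomorphism preserves integrals, it restricts to a bijection between the subspaces with values in $\hat\cA_0$. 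Composing with $\hat\lambda_*$, a bornological isomorphism onto $C^\infty(I\times\varphi(U),\ccD(\varphi(U)))$ commuting with $\LX$ and $\Liep_X$ (Lemma~\ref{nformtestfct}) that restricts to $\ub0{\varphi(U)} \to \lub0{\varphi(U)}$ because $\int\hat\lambda(\omega) = \int\omega$, yields the desired isomorphism of the ambient spaces. I also record the support identity $\supp\tilde\phi(\e,x) = \varphi(\supp\Phi(\e,\varphi^{-1}x))$ (both $\hat\lambda$ and the pullback along $\varphi^{-1}$ transform supports obviously) and the fact that $\varphi$ and $\varphi^{-1}$ carry $\csub$-pairs to $\csub$-pairs.

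For condition (i) I would exploit that Definition~\ref{smoothkern}(1)(i) may be verified against a single metric of my choosing, namely $g\coleq\varphi^*\delta$ on $U$ with $\delta$ the Euclidean metric on $\varphi(U)$; that this suffices is the content of Lemma~\ref{lama} (cf.\ the remark after Definition~\ref{smoothkern}). For this $g$ the map $\varphi\colon(U,g)\to(\varphi(U),\delta)$ is a Riemannian isometry, so $\varphi(B^g_r(p)) = B^\delta_r(\varphi(p))$; and given $K\csub\varphi(U)$, after shrinking $C$ and $\e_0$ so that $\e_0C$ is below the distance from $K$ to the complement of $\varphi(U)$, the intrinsic ball $B^\delta_{\e C}(x)$ equals the ordinary Euclidean ball $B_{\e C}(x)\subseteq\varphi(U)$ for $x\in K$, $\e\le\e_0$. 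Combined with the support identity above, Definition~\ref{smoothkern}(1)(i) for $g$ and Definition~\ref{localsmoothkern}(1)(i) become equivalent in both directions.

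For the derivative estimates the key point is that a constant coordinate field $e_a$ on $\varphi(U)$ satisfies $\Lie_{e_a}\tilde\phi(\e,x)(y) = \pd_{y^a}(\tilde\phi(\e,x)(y))$ (as $\Div e_a = 0$) and $\Liep_{e_a}\tilde\phi(\e,x)(y) = \pd_{x^a}(\tilde\phi(\e,x)(y))$, and that these constant-coefficient operators commute, so that $\pd_y^\beta\pd_{x+y}^\alpha\tilde\phi$ is precisely the iterated Lie expression of Definition~\ref{smoothkern}(1)(ii) built from the coordinate frame $e_1,\dotsc,e_n$ with $\Lie$-fields of multiplicity $\beta$ (outer) and $(\Liep_{e_b}+\Lie_{e_b})$-factors of multiplicity $\alpha$ (inner). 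Since $\hat\lambda_*$ commutes with both Lie derivatives (Lemma~\ref{nformtestfct}) and the pullback $(\varphi^{-1})^*$ intertwines each Lie derivative with the one along the transported field (Lemma~\ref{tpliediffeo}), this iterated expression for $\tilde\phi$ equals $\hat\lambda_*$ of $(\varphi^{-1})^*$ of the corresponding iterated expression for $\Phi$ with suitable smooth vector fields on $U$. The implication $\Phi\in\sk kU \Rightarrow \tilde\phi\in\lsk k{\varphi(U)}$ then reduces to substituting coordinate fields into Definition~\ref{smoothkern}(1)(ii): using $\abso{\hat\lambda(\omega)(y)} = \norm{\omega(y)}_\delta$ and the pointwise norm comparison of Lemma~\ref{lama}(i) along $\varphi^{-1}$ --- legitimate because, by condition (i), all derivatives $\pd_y^\beta\pd_{x+y}^\alpha\tilde\phi(\e,x)(\cdot)$ for $x\in K$, $\e\le\e_0$ are supported in one fixed compact $K'\csub\varphi(U)$ (a $\pd_x$-derivative of $\tilde\phi(\e,x)(\cdot)$ is again supported in $\overline{B_{\e C}(x)}$, since for $\abso{y-x}>\e C$ the relevant difference quotient in $x$ vanishes for small increments) --- the global $O(\e^{-n-m})$ estimate transfers at once. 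The converse direction is where I expect the real work to lie: given only the coordinate-frame estimates of Definition~\ref{localsmoothkern}(1)(ii) one must produce Definition~\ref{smoothkern}(1)(ii) for arbitrary vector fields $\theta_i,\zeta_j$ on $\varphi(U)$. Writing $\theta_i = \sum_a\theta_i^ae_a$, $\zeta_j = \sum_b\zeta_j^be_b$ and distributing by Leibniz expresses the iterated expression as a finite sum of products of derivatives of the (smooth, locally bounded) coefficient functions with terms $\pd_y^\beta\pd_{x+y}^\gamma\tilde\phi(\e,x)(y)$; the delicate point is the $\e$-count. Writing $\zeta_j(y) = \zeta_j(x) + r_j(x,y)$ with $r_j(x,y) = O(\abso{y-x})$, one has $\Liep_{\zeta_j}+\Lie_{\zeta_j} = \sum_b\zeta_j^b(x)(\Liep_{e_b}+\Lie_{e_b}) + \sum_b r_j^b(x,y)\Lie_{e_b}$ plus a zeroth-order term, so that every extra $y$-derivative produced by a factor $\Liep_{\zeta_j}+\Lie_{\zeta_j}$ carries a coefficient which on $\supp\tilde\phi(\e,x)\subseteq B_{\e C}(x)$ is $O(\e)$, cancelling the $\e^{-1}$; and when an outer $\Lie_{\theta_i}$ falls on such an $r$-coefficient rather than on $\tilde\phi$, the $\e$-factor it forgoes is exactly the one the differentiated coefficient surrenders. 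A careful accounting along these lines bounds each term by $O(\e^{-n-m})$ uniformly on $K$ (and vanishing off a fixed compact), which is Definition~\ref{smoothkern}(1)(ii).

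Finally, the $k$-th order approximation property transfers immediately: for $f\in C^\infty(\varphi(U))$ the transformation rule for integrals of $n$-forms gives $\int_{\varphi(U)} f(y)\tilde\phi(\e,x)(y)\,\ud y = \int_{\varphi(U)} f\cdot(\varphi^{-1})^*(\Phi(\e,\varphi^{-1}x)) = \int_U (f\circ\varphi)\cdot\Phi(\e,\varphi^{-1}x)$, while $f(x) = (f\circ\varphi)(\varphi^{-1}x)$, so $\abso{f(x) - \int_{\varphi(U)} f(y)\tilde\phi(\e,x)(y)\,\ud y} = \abso{(f\circ\varphi)(\varphi^{-1}x) - \int_U (f\circ\varphi)\cdot\Phi(\e,\varphi^{-1}x)}$; since $f\mapsto f\circ\varphi$ is a bijection $C^\infty(U)\to C^\infty(\varphi(U))$ carrying $\varphi^{-1}(K)\csub U$ to $K\csub\varphi(U)$, this is $O(\e^{k+1})$ uniformly for $x\in K$ exactly when $\Phi\in\sk kU$. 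Putting the four pieces together, the ambient bornological isomorphism restricts to the asserted isomorphism $\sk kU\cong\lsk k{\varphi(U)}$.
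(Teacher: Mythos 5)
Your proposal is correct and follows essentially the same route as the paper, whose own proof is only a three-line sketch: condition (i) is transferred via the metric comparison of Lemma \ref{lama}(ii), and condition (ii) via Lemmata \ref{nformtestfct}, \ref{tpliediffeo} and \ref{lama}(i), the point being exactly your observation that $\pd_y^\beta\pd_{x+y}^\alpha$ is the iterated Lie expression along constant coordinate fields. You in fact go beyond the paper in the one place where real work is needed, namely the converse half of condition (ii) (recovering the estimate for arbitrary vector fields from the coordinate-field estimates via the $O(\e)$ gain of $\zeta(y)-\zeta(x)$ on $\supp\tilde\phi(\e,x)$), which the paper's proof leaves entirely implicit; your sketch of that step contains the correct key idea.
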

\begin{proof}
 Set $\tilde \phi \coleq \hat\lambda_* ( \varphi_* \Phi)$. For (i) of Definition \ref{localsmoothkern} fix $K \csub \Omega$, then there are $\e_0>0$ and $C>0$ such that $\supp \Phi(\e,p) \subseteq B_{\e C}(p)$ for all $p \in \varphi^{-1}(K)$ and $\e \le \e_0$. We may assume that $\e_0 C < \dist(\varphi^{-1}(K), \pd U)$. Then $
 \supp \tilde \phi(\e, x) = \supp\, ( \varphi_* \Phi)(\e,x) = \supp \varphi_*(\Phi(\e,\varphi^{-1}(x)))
\subseteq \varphi ( B_{\e C}(\varphi^{-1}(x))) \subseteq B_{\e C'}(x)$ for some $C'>0$ by Lemma \ref{lama} (ii). (ii) of Definition \ref{localsmoothkern} is a consequence of Lemmata \ref{nformtestfct}, \ref{tpliediffeo} and \ref{lama} (i).
\end{proof}

\section{Approximation using smoothing kernels}

The practical importance of smoothing kernels lies in their approximation properties; in the context of Colombeau algebras one often has to consider expressions like $\int f(y)\tilde \phi(\e,x)(y)\, \ud y$. In particular this appears in the proof of injectivity of the embedding of distributions and in questions of association (cf. \cite[Section 9]{global2}). Variants of the integral just mentioned involve taking derivatives and integrating over $x$ instead of $y$.

The behaviour of these integrals can be guessed by considering the simple example $\tilde \phi(\e,x) \coleq \e^{-n} \varphi((.-x)/\e^n)$ for some $\varphi \in \cA_0(\Omega)$. In this case we have (by Taylor expansion of $f$, partial integration, and the fact that $\pd_{x+y}\tilde \phi = 0$):
\begin{enumerate}
 \item[(a)] $\int f(x,y) \tilde\phi (\e,x,y) \, \ud y \to f(x,x)$,
 \item[(b)] $\int f(x,y) \tilde\phi (\e,x,y) \, \ud x \to f(y,y)$,
 \item[(c)] $\int f(x,y) (\pd_{y_i} \tilde\phi) (\e,x,y) \, \ud y \to -(\pd_{y_i} f)(x,x)$,
 \item[(d)] $\int f(x,y) (\pd_{x_i} \tilde\phi) (\e,x,y) \, \ud x \to -(\pd_{x_i} f)(y,y)$. 
 \item[(e)] $\int f(x,y) (\pd_{x_i} \tilde\phi) (\e,x,y) \, \ud y \to (\pd_{y_i} f)(x,x)$,
 \item[(f)] $\int f(x,y) (\pd_{y_i} \tilde\phi) (\e,x,y) \, \ud x \to (\pd_{x_i} f)(y,y)$,
\end{enumerate}
uniformly on compact sets, and analogously for higher derivatives. We will see that the same results hold for arbitrary smoothing kernels (for the integral over $x$ we assume $f$ to have compact support): from (a) and (b) (Remark after Definition \ref{localsmoothkern} and Proposition \ref{ableit}) partial integration gives (c) and (d), (e) and (f) are handled by Corollary \ref{blahfoo}.

For derivatives of $\tilde \phi(\e,x)(y)$ in each slot we will employ, in the obvious sense, the notation $(\pd_x^\alpha \pd_y^\beta \tilde \phi)(\e,x)(y)$ and we set $\pd_{x+y}^\alpha = (\pd_x + \pd_y)^\alpha$. Furthermore, we write $\tilde \phi(\e,x,y)$ instead of $\tilde \phi(\e,x)(y)$ where it is convenient.

\begin{lemma}\label{dersupp}Let $\tilde \phi \in \lsk0\Omega$ be a local smoothing kernel. Then $\forall K \csub \Omega$ $\exists C>0$ $\exists \e_0>0$ such that $\supp \pd_x^\alpha\pd_y^\beta \tilde \phi(\e,x) \subseteq B_{C\e}(x)$ for all $x \in K$, $\e \le \e_0$, and $\alpha,\beta \in \bN_0^n$.
\end{lemma}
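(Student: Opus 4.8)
The plan is to bootstrap the one support estimate that is available, namely condition (i) of Definition~\ref{localsmoothkern}, to all derivatives of $\tilde\phi$ simultaneously, using two elementary facts: passing to a $\pd_y$-derivative never enlarges the support of a test function, and a smooth function that vanishes on an open set has all of its partial derivatives vanishing there too. First I would replace $K$ by a slightly larger compact set: choose $K'\csub\Omega$ with $K$ contained in the interior of $K'$, and apply Definition~\ref{localsmoothkern}(i) to $K'$ to obtain $C_0>0$ and $\e_1>0$ with $\supp\tilde\phi(\e,x)\subseteq B_{\e C_0}(x)$ for all $x\in K'$ and $\e\le\e_1$. Then I would shrink $\e_1$ to some $\e_0$ for which $B_{\e_0 C_0}(x)$ lies in the interior of $K'$ and $B_{3\e_0 C_0}(x)\subseteq\Omega$ for every $x\in K$; this is possible because $K$ is compact and is contained in the interior of $K'$, which is itself contained in $\Omega$. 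I claim the lemma then holds with $C\coleq 3C_0$ and this $\e_0$, which do not depend on $\alpha,\beta$.

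For $y$-derivatives alone the claim is immediate: since $\supp(\pd_y^\beta g)\subseteq\supp g$ for every $g\in\ccD(\Omega)$, for all $x\in K'$ and $\e\le\e_0$ we get $(\pd_y^\beta\tilde\phi)(\e,x)(y)=0$ as soon as $|y-x|\ge\e C_0$. For the general case I would fix $\e\le\e_0$, $x_0\in K$ and a point $y_0\in\Omega$ with $|y_0-x_0|\ge 2\e C_0$, and let $x$ range over the ball $B_{\e C_0}(x_0)$, which lies in the interior of $K'$ by the choice of $\e_0$; for such $x$ one has $|y_0-x|\ge|y_0-x_0|-|x-x_0|>\e C_0$, hence $(\pd_y^\beta\tilde\phi)(\e,x)(y_0)=0$ by the previous sentence. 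Now the map $x\mapsto(\pd_y^\beta\tilde\phi)(\e,x)(y_0)$ is a smooth real-valued function of $x$ near $x_0$ — it is $\tilde\phi(\e,\cdot)$ followed by the bounded linear maps $\pd_y^\beta\colon\ccD(\Omega)\to\ccD(\Omega)$ and $\operatorname{ev}_{y_0}\colon\ccD(\Omega)\to\bR$, both of which commute with differentiation in $x$ — and by construction $(\pd_x^\alpha\pd_y^\beta\tilde\phi)(\e,x_0)(y_0)$ is exactly its $\alpha$-th partial derivative at $x_0$. Since that function vanishes on the whole neighbourhood $B_{\e C_0}(x_0)$ of $x_0$, the derivative vanishes, so $(\pd_x^\alpha\pd_y^\beta\tilde\phi)(\e,x_0)(y_0)=0$. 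As $y_0$ was an arbitrary point with $|y_0-x_0|\ge 2\e C_0$, this shows $\supp\pd_x^\alpha\pd_y^\beta\tilde\phi(\e,x_0)\subseteq\cl B_{2\e C_0}(x_0)\subseteq B_{3\e C_0}(x_0)=B_{C\e}(x_0)\subseteq\Omega$.

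I do not expect a real obstacle here. The one point that needs care is that Definition~\ref{localsmoothkern}(i) constrains $\supp\tilde\phi(\e,x)$ only for $x$ in a \emph{fixed} compact set, whereas differentiating in $x$ forces one to move $x$ over an open neighbourhood — which is precisely why the passage to the enlarged set $K'$ is needed — and one must also mind the strict versus non-strict inequalities in order to land inside an \emph{open} ball whose radius is a fixed multiple of $\e$.
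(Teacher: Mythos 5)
Your proof is correct and follows essentially the same route as the paper's: note that $\pd_y$ preserves supports, enlarge $K$ to a compact neighbourhood so that the support estimate of Definition~\ref{localsmoothkern}(i) is available for all $x$ near a given $x_0\in K$, and observe that differentiation in $x$ cannot push the support outside a slightly larger ball. The paper implements this last step via supports of difference quotients of $\tilde\phi(\e,\cdot)$ in $\ccD(\Omega)$, whereas you evaluate at a fixed $y_0$ and use that a smooth scalar function vanishing on an open neighbourhood of $x_0$ has all derivatives zero there --- a cosmetic difference, and if anything your version is the more carefully quantified one.
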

\begin{proof}
 As $\pd_y$ preserves the support we set $\beta=0$. Given any $\delta>0$ with $\overline{B}_\delta(K) \subseteq \Omega$ we know that there are $\e_0>0$ and $C>0$ such that $\supp \tilde \phi(\e,x) \subseteq B_{\e C}(x)$ for all $x \in \overline{B}_\delta(K)$ and $\e \le \e_0$. Choose any $C' > C$ and suppose $\alpha=e_{i_1} + \dotsc + e_{i_k}$ with $k=\abso{\alpha}$, then $\pd_x^\alpha \tilde \phi(\e,x)$ is given by derivatives at $t_1,\dotsc,t_k=0$ of $\tilde \phi ( \e, x + t_1 e_{i_1} + \dotsc + t_k e_{i_k} )$; clearly for small $t_i$ the support of the difference quotient is in $B_{\e C}(x + t_1 e_{i_1} + \dotsc + t_k e_{i_k}) \cup B_{\e C}(x) \subseteq B_{\e C'}(x)$.
\end{proof}

Recall that by simple Taylor expansion $\abso{\int_\Omega f(x,y) \tilde \phi(\e,x,y)\,\ud y - f(x,x)} = O(\e)$ and $\abso{\int_\Omega f(x,y) (\pd_{x+y}^\alpha \tilde \phi)(\e,x,y)\,\ud y} = O(\e)$ for $\abso{\alpha}>0$ uniformly for $x$ in compact sets. We will now show the equivalent when the integral is over $x$.

\begin{proposition}\label{ableit} Let $\tilde \phi \in \lsk0\Omega$ be a local smoothing kernel and $f \in C^\infty(\Omega \times \Omega)$ such that there is $K \csub \Omega$ with $\supp f(\cdot, y) \subseteq K$ for all $y \in \Omega$.
Then
\begin{align*}
\abso{\int_\Omega f(x,y) \tilde \phi(\e,x,y)\,\ud x - f(y,y)} &= O(\e)\qquad\textrm{and} \\
\abso{\int_\Omega f(x,y) (\pd_{x+y}^\alpha \tilde \phi)(\e,x,y)\,\ud x} &= O(\e)\qquad(\abso{\alpha}>0)
\end{align*}
uniformly for $y \in \Omega$.
\end{proposition}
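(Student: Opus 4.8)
The plan is to reduce both estimates to the estimates for the integral over the \emph{argument} of $\tilde\phi$ — recalled just before the statement — by means of two linear changes of variables together with one comparison of $\tilde\phi$ at two nearby base points. First I would cut down the range of $y$. Let $C,\e_0$ be support constants for $\tilde\phi$ and, via Lemma~\ref{dersupp}, for all of its derivatives $\pd_x^\gamma\pd_y^\delta\tilde\phi$, attached to the compact set $K$. For $\e\le\e_0$ the integrand vanishes identically unless $\dist(y,K)<C\e$: the factor $f(\cdot,y)$ confines $x$ to $K$, while $(\pd_{x+y}^\alpha\tilde\phi)(\e,x)(y)\neq0$ forces $|x-y|<C\e$; and $f(y,y)=0$ whenever $y\notin K$. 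Hence outside $\tilde K\coleq\{y:\dist(y,K)\le C\e_0\}$ — a compact subset of $\Omega$ once $\e_0$ is small — both estimates are trivial, and from now on $y\in\tilde K$, so that all constants below (bounds for $f$ and its derivatives near $\tilde K\times\tilde K$, support radii, the $O(\e^{-n})$ of Definition~\ref{localsmoothkern}(ii)) are uniform.

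Next, substitute $x=y-v$ in $\int_\Omega f(x,y)(\pd_{x+y}^\alpha\tilde\phi)(\e,x)(y)\,\ud x$, getting $\int_\Omega f(y-v,y)(\pd_{x+y}^\alpha\tilde\phi)(\e,y-v)(y)\,\ud v$, and then replace the base point $y-v$ by $y$. For fixed $v$ the curve $t\mapsto(\pd_{x+y}^\alpha\tilde\phi)(\e,y-(1-t)v)(y+tv)$, $t\in[0,1]$, joins $(\pd_{x+y}^\alpha\tilde\phi)(\e,y-v)(y)$ to $(\pd_{x+y}^\alpha\tilde\phi)(\e,y)(y+v)$; since the difference of the two arguments of $\tilde\phi$ equals $v$ along the whole curve, its derivative is $\sum_k v_k\,(\pd_{x+y}^{\alpha+e_k}\tilde\phi)(\e,y-(1-t)v)(y+tv)$, which by Definition~\ref{localsmoothkern}(ii) (no $\pd_y$ present) is $O(|v|\,\e^{-n})$, and by Lemma~\ref{dersupp} the whole difference is supported in $\{|v|<C\e\}$ as a function of $v$. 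Multiplying by the bounded factor $f(y-v,y)$ and integrating over this set of volume $O(\e^n)$ contributes only $O(\e^n)\cdot O(\e)\cdot O(\e^{-n})=O(\e)$. Therefore
\[
\int_\Omega f(x,y)(\pd_{x+y}^\alpha\tilde\phi)(\e,x)(y)\,\ud x=\int_\Omega f(y-v,y)(\pd_{x+y}^\alpha\tilde\phi)(\e,y)(y+v)\,\ud v+O(\e),
\]
and the substitution $w=y+v$ turns the main term into $\int_\Omega f(2y-w,y)(\pd_{x+y}^\alpha\tilde\phi)(\e,y)(w)\,\ud w$.

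To finish, put $g(p,w)\coleq f(2p-w,p)$; on the range that matters ($p\in\tilde K$, $|w-p|<C\e_0$) the point $2p-w$ stays in $\Omega$, so $g$ is smooth near the diagonal, and after multiplying by a suitable cutoff we may treat $g$ as an element of $C^\infty(\Omega\times\Omega)$ without changing the integral for small $\e$. The main term is then $\int_\Omega g(y,w)(\pd_{x+y}^\alpha\tilde\phi)(\e,y)(w)\,\ud w$, an integral over the argument of $\tilde\phi$ with base point fixed at $y$; by the Taylor-expansion estimates recalled before the statement this is $g(y,y)+O(\e)$ when $\alpha=0$ and $O(\e)$ when $|\alpha|>0$, uniformly for $y\in\tilde K$. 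Since $g(y,y)=f(y,y)$, adding back the $O(\e)$ from the comparison step gives exactly the two claimed estimates.

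The only genuinely substantive step is the base-point comparison, and it is the one place where the precise form of Definition~\ref{localsmoothkern}(ii) is used: sliding the base point of $\tilde\phi$ a distance of order $\e$ while keeping the difference of its two arguments fixed costs a single $\pd_{x+y}$, which is merely $O(\e^{-n})$ instead of the $O(\e^{-n-1})$ a bare $\pd_x$ or $\pd_y$ would cost; that extra power of $\e$, tested against a support of volume $O(\e^n)$, is what yields order $O(\e)$ and what would make the result fail for a generic $\pd_x$. Everything else — the support reduction, the two linear substitutions, and the appeal to the argument-integral estimates — is routine bookkeeping, the only minor care being that, once the compactly supported integrand is taken into account, $v$ ranges over a compact neighbourhood of $0$ and $g$ need only be defined near the diagonal.
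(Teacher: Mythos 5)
Your proposal is correct and follows essentially the same route as the paper's proof: the base-point comparison along the segment where the difference of the two arguments of $\tilde\phi$ is held fixed (costing one $\pd_{x+y}$, hence only $O(\e^{-n})$, against a support of volume $O(\e^n)$) is exactly the paper's comparison of $(\pd_{x+y}^\alpha\tilde\phi)(\e,x,y)$ with $(\pd_{x+y}^\alpha\tilde\phi)(\e,y,2y-x)$ via the Taylor formula. The only cosmetic difference is that you finish by feeding the auxiliary function $g(p,w)=f(2p-w,p)$ into the previously recalled argument-integral estimates, where the paper redoes the Taylor expansion of $f$ in its first slot inline.
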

\begin{proof}
Without limitation of generality we can assume that there are $r>0$ and $a \in \Omega$ such that $K \subseteq \overline{B}_{r}(a) \subseteq \overline{B}_{4 r(a)} \subseteq \Omega$ for all $y \in \Omega$. In fact, any $K \csub \Omega$ can be written as the sum of finitely many compact sets contained in suitable closed balls which lie in $\Omega$. If the result holds for each of these, it holds for $K$.

The integral then is over $x \in B_{r}(a)$. By Lemma \ref{dersupp} $\exists \e_0>0,C>0$: $\supp\, (\pd_{x+y}^\beta \tilde \phi)(\e,x) \subseteq B_{C\e}(x)$ for all $\beta \in \bN_0^n$, $x \in \overline{B}_{4r}(a)$, and $\e < \e_0$. For $\e < \e_0$ and $x \in B_{r}(a)$ this implies $\supp\, (\pd_{x+y}^\alpha \tilde \phi)(\e,x) \subseteq B_{r+C\e}(a)$ thus we only have to consider $y$ in this set, as for $y \not\in B_{r+C\e}(a)$ the integral vanishes.
We furthermore note that for $\e<\min(\e_0, (r/(4C)))$ and $y \in B_{r+C\e}(a)$ we have $\supp (\pd_{x+y}^\alpha \tilde \phi)(\e,y) \subseteq B_{C\e}(y) = 2y - B_{C\e}(y) \subseteq 2y-B_{r+2C\e}(a) \subseteq B_{3r + 4C\e}(a) \subseteq B_{4r}(a) \subseteq \Omega$ and thus the above integral equals $\int_{B_{r+2C\e}(a)} f(x,y)(\pd_{x+y}^\alpha \tilde \phi)(\e,y,2y-x)\, \ud x$ and we can write it as
\begin{multline}\label{september_alpha}
\int_{B_{r+2C\e}(a)} f(x,y)\bigl((\pd_{x+y}^\alpha \tilde \phi)(\e,x,y) - (\pd_{x+y}^\alpha \tilde \phi)(\e,y,2y-x)\bigr)\, \ud x \\
+ \int_{B_{r+2C\e}(a)} f(x,y)(\pd_{x+y}^\alpha \tilde \phi)(\e,y,2y-x)\, \ud x
\end{multline}
for $y \in B_{r+C\e}(a)$ and $\e \le \min(\e_0, r/(4C))$. For $x \in B_{r+2C\e}(a) \subseteq B_{4r}(a)$ the Taylor formula \cite[Theorem 5.12]{KM} gives (due to $2y-x \in B_{3r +4C\e}(a) \subseteq B_{4r}(a) \subseteq \Omega$)
\begin{multline*}
 (\pd_{x+y}^\alpha \tilde \phi)(\e, x, y) = (\pd_{x+y}^\alpha \tilde \phi)(\e,y,2y-x)\\
+ \int_0^1 \ud (\pd_{x+y}^\alpha \tilde \phi)(\e, y+t(x-y), 2y-x+t(x-y)) \cdot (x-y, x-y)\, \ud t
\end{multline*}
where the differential $\ud$ is with respect to the pair of variables $(x,y)$.
Then the first summand of \eqref{september_alpha} is given by (substituting $x=y+\e z$)
\begin{multline}\label{september_3}
\int_{B_{r/\e+2C}(a-y)} f(y+\e z, y) \cdot\\
 \int_0^1 \ud (\pd_{x+y}^\alpha \tilde \phi)(\e, y+t\e z, y+(t-1)\e z) \cdot (\e z, \e z)\, \ud t\, \e^n\, \ud z
\end{multline}
By linearity of the differential we note that $\ud (\pd_{x+y}^\alpha \tilde \phi)(\e, y+t \e z, y + (t-1) \e z) \cdot (\e z, \e z )$ equals $\e \sum_{i=1}^n (\pd_{x+y}^{\alpha+e_i} \tilde \phi)(\e, y + t \e z , y + (t-1) \e z ) z^i$ where $z=(z^1,\dotsc,z^n)$. From the properties of local smoothing kernels we have that $\abso{ (\pd_{x+y}^{\alpha+e_i}\tilde \phi)(\e,x,y)} = O(\e^{-n})$ uniformly for $x \in \overline{B}_{r+2C\e}(a)$ and $y \in \Omega$ and also $\supp\, (\pd_{x+y}^{\alpha+e_i} \tilde \phi)(\e,x) \subseteq B_{C\e}(x)$ for $x \in \overline{B}_{4r}(a)$ and all $\e < \e_0$. In \eqref{september_3} we only need to integrate over those $z$ such that $y + (t-1)\e z \in B_{C\e}(y + t\e z)$, i.e., $\abso{ y + (t-1)\e z - y - t \e z } = \abso{\e z} < C \e$ which is implied by $\abso{z} < C$. Thus \eqref{september_3} becomes
\begin{multline*}
 \e \cdot \int_{B_{r/\e+2C}(a-y) \cap B_C(0)} f(y+\e z, y) \cdot\\
\int_0^1 \sum_{i=1}^n (\pd_{x+y}^{\alpha+e_i} \tilde \phi)(\e, y + t \e z, y + (t-1) \e z)z^i\, \ud t \, \e^n\, \ud z.
\end{multline*}

By what was said this can be estimated by
$O(\e)$ uniformly for $y \in B_{4r}(a)$ and thus for $y \in \Omega$.
It remains to examine the second term of \eqref{september_alpha}
%\[ \int_{B_{r_1+2C\e}(a)} f(x,y) (\pd_{x+y}^\alpha \tilde \phi)(\e,y,2y-x)\, \ud x \]
for $y \in B_{r+C\e}(a)$. With Taylor expansion in the first slot of $f$ this is
\begin{multline*}
f(y,y) \cdot \int_{B_{r+2C\e}(a)} (\pd_{x+y}^\alpha \tilde \phi)(\e,y,2y-x)\,\ud x\\
+ \int_{B_{r+2C\e}(a)} \int_0^1 (\ud_1 f) (y+t(x-y),y) \cdot (x-y)\, \ud t\, (\pd_{x+y}^\alpha \tilde \phi)(\e, y, 2y-x)\,\ud x
\end{multline*}
Substituting $2y-x=z$, because $\supp \tilde \phi(\e,y) \subseteq B_{C\e}(y) = 2y - B_{C\e}(y) \subseteq 2y - B_{r + 2C\e}(a) \subseteq B_{3r + 4C\e}(a) \subseteq B_{4r}(a)$ for $y \in B_{r+C\e}(a)$ the first integral is given by
%\[ \int_{2y-B_{r_1+2C\e}(a)} (\pd_{x+y}^\alpha \tilde \phi)(\e,y,z) \, \ud z =
$\int_{\Omega} (\pd_{x+y}^\alpha \tilde \phi)(\e,y,z)\, \ud z$ which is 1 for $\alpha=0$ and $0$ for $\abso{\alpha}>0$. In the second we substitute $x=y+\e z$ and obtain
\[
 \e \int_{B_{r/\e + 2C}(a-y)} \int_0^1 (\ud_1 f) (y+t\e z, y)z\, \ud t\, (\pd_{x+y}^\alpha \tilde \phi)(\e, y, y-\e z)\e^n\, \ud z
\]
By the support property of smoothing kernels (Definition \ref{localsmoothkern} (i)) we only have to integrate over a bounded set and the integrand is uniformly bounded on all $x$ and $y$ in question, so this integral is $O(\e)$.
\end{proof}

\begin{corollary}\label{blahfoo}For any $\alpha \in \bN_0^n$ we have
\begin{enumerate}
\item[(i)] For any $f \in C^\infty(\Omega \times \Omega)$ and $K \csub \Omega$ we have
\[ \sup_{y \in K} \abso{ \int f(x,y) (\pd_x^\alpha \tilde \phi)(\e,x,y)\, \ud y - (\pd_y^\alpha f)(x,x)} = O(\e)
\]
 \item[(ii)] For any $f$ as in Proposition \eqref{ableit} we have
\[ \sup_{y \in \Omega} \abso{ \int f(x,y) (\pd_y^\alpha \tilde \phi)(\e,x,y)\, \ud x - (\pd_x^\alpha f)(y,y)} = O(\e) \]
\end{enumerate}
\end{corollary}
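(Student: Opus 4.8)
The plan is to reduce both assertions to the asymptotics already recorded for $\pd_{x+y}^\alpha\tilde\phi$ — the display preceding Proposition \ref{ableit} for the integral over $y$, and Proposition \ref{ableit} itself for the integral over $x$ — using the operator identity $\pd_x=\pd_{x+y}-\pd_y$ in (i) and $\pd_y=\pd_{x+y}-\pd_x$ in (ii). Since $\pd_{x+y}$, $\pd_x$ and $\pd_y$ are commuting constant-coefficient differential operators, the multi-index binomial theorem gives
\[ \pd_x^\alpha\tilde\phi=\sum_{\beta\le\alpha}\binom{\alpha}{\beta}(-1)^{\abso{\alpha-\beta}}\pd_{x+y}^\beta\pd_y^{\alpha-\beta}\tilde\phi \]
and symmetrically $\pd_y^\alpha\tilde\phi=\sum_{\beta\le\alpha}\binom{\alpha}{\beta}(-1)^{\abso{\alpha-\beta}}\pd_{x+y}^\beta\pd_x^{\alpha-\beta}\tilde\phi$.

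For (i) I would substitute the first expansion into $\int f(x,y)(\pd_x^\alpha\tilde\phi)(\e,x,y)\,\ud y$. By Lemma \ref{dersupp}, for $x$ ranging in a fixed compact set and $\e$ small every $(\pd_{x+y}^\beta\pd_y^{\alpha-\beta}\tilde\phi)(\e,x,\cdot)=\pd_y^{\alpha-\beta}\bigl((\pd_{x+y}^\beta\tilde\phi)(\e,x,\cdot)\bigr)$ is supported in a ball $B_{C\e}(x)\csub\Omega$, so I may integrate by parts in $y$, transferring $\pd_y^{\alpha-\beta}$ onto $f$ at the price of a factor $(-1)^{\abso{\alpha-\beta}}$ which cancels the sign from the binomial expansion. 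This rewrites the integral as $\sum_{\beta\le\alpha}\binom{\alpha}{\beta}\int(\pd_y^{\alpha-\beta}f)(x,y)(\pd_{x+y}^\beta\tilde\phi)(\e,x,y)\,\ud y$, where $\pd_y^{\alpha-\beta}f$ denotes differentiation in the second slot. The cited asymptotics say that the $\beta=0$ term tends to $(\pd_y^\alpha f)(x,x)$ with error $O(\e)$, while every term with $\abso\beta>0$ is itself $O(\e)$, all uniformly for $x$ in compact subsets of $\Omega$; summing over $\beta$ yields (i). No support hypothesis on $f$ is needed here because the $y$-integration is effectively over $B_{C\e}(x)$.

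For (ii) I would argue identically, now inserting the expansion of $\pd_y^\alpha\tilde\phi$ into $\int f(x,y)(\pd_y^\alpha\tilde\phi)(\e,x,y)\,\ud x$ and integrating by parts in $x$; the boundary terms vanish because $f(\cdot,y)$ is supported in the fixed compact $K\csub\Omega$, and $\pd_x^{\alpha-\beta}f$ has the same $x$-support — this is the only use of the standing hypothesis on $f$. The same sign cancellation leaves $\sum_{\beta\le\alpha}\binom{\alpha}{\beta}\int(\pd_x^{\alpha-\beta}f)(x,y)(\pd_{x+y}^\beta\tilde\phi)(\e,x,y)\,\ud x$, and applying Proposition \ref{ableit} to each $g\coleq\pd_x^{\alpha-\beta}f$ (which again satisfies that hypothesis) shows the $\beta=0$ term converges to $(\pd_x^\alpha f)(y,y)$ with error $O(\e)$ and the remaining terms are $O(\e)$, uniformly for $y\in\Omega$. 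The computations are routine; the only genuine points of care are the two integrations by parts — legitimated for (i) by the compact $y$-support of the derivatives of $\tilde\phi$ inside $\Omega$ (Lemma \ref{dersupp}) and for (ii) by the compact $x$-support of $f$ — together with the observation that differentiating $f$ does not enlarge the relevant support; the required uniformity then propagates for free since it is built into the asymptotic estimates being invoked.
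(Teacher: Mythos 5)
Your proposal is correct and rests on the same two ingredients as the paper's proof: converting $\pd_x^\alpha$ (resp. $\pd_y^\alpha$) into $\pd_{x+y}$- and $\pd_y$- (resp. $\pd_x$-) derivatives, integrating the latter by parts onto $f$ (justified by Lemma \ref{dersupp} resp. the compact $x$-support of $f$), and invoking the known $O(\e)$ asymptotics for $\int f\,(\pd_{x+y}^\beta\tilde\phi)$. The only difference is organizational: you expand $(\pd_{x+y}-\pd_y)^\alpha$ in closed form and finish in one step, whereas the paper peels off $\pd_{x+y}^\alpha$ and handles the remainder $\sum_{0<\beta\le\alpha}\binom{\alpha}{\beta}\pd_y^\beta\pd_x^{\alpha-\beta}$ by induction on $\abso{\alpha}$ — your version avoids the induction but is otherwise the same argument.
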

\begin{proof}
(i) We perform induction on $\abso{\alpha}$. The case $\alpha=0$ was resp.~mentioned after Definition \ref{localsmoothkern} resp.\ handled in Proposition \eqref{ableit}. For $\abso{\alpha}>0$ we have (by induction or combinatorically) the identity $\pd_x^\alpha = \pd_{x+y}^\alpha - \sum_{0 < \beta \le \alpha}\binom{\alpha}{\beta}\pd_y^\beta \pd_x^{\alpha-\beta}$ (where $\binom{\alpha}{\beta} \coleq \binom{\alpha_1}{\beta_1} \dotsm \binom{\alpha_n}{\beta_n}$) thus using partial integration the integral
\[ \int f(x,y)(\pd_x^\alpha \tilde \phi)(\e,x,y)\,\ud y \]
is given by
\[ O(\e) - \sum_{0 < \beta \le \alpha}\binom{\alpha}{\beta} (-1)^\beta \int (\pd_y^\beta f)(x,y)(\pd_x^{\alpha-\beta}\tilde \phi)(\e,x,y)\, \ud y \]
From the result for $\abso{\alpha}-1$ we see that the integral converges to $(\pd_y^\alpha f)(x,x)$ uniformly and of order $O(\e)$, so (i) follows because $\sum_{0 < \beta \le \alpha}\binom{\alpha}{\beta}(-1)^\beta = -1$. (ii) is done in the same way.
\end{proof}

\section*{Acknowledgements}This research has been supported by START-project Y237 and project P20525 of the Austrian Science Fund and the Doctoral College 'Differential Geometry and Lie Groups' of the University of Vienna.

\end{document}